\documentclass[12pt]{article}

\usepackage[T1]{fontenc}
\usepackage{lmodern}
\usepackage{amsmath,amsfonts,amsthm}
\usepackage[a4paper,margin=37mm]{geometry}
\usepackage{enumitem}
\usepackage[hidelinks]{hyperref}
\hypersetup{
  pdftitle={Continuous Linear Surjections from Cp(X) onto Symmetric Sequence Ideals in c0},
  pdfauthor={Jerzy Kakol and Wieslaw Sliwa},
  pdfsubject={Continuous linear images of Cp-spaces and symmetric sequence ideals},
  pdfkeywords={Cp-space, symmetric sequence ideal, continuous linear surjection, c0, Josefson-Nissenzweig property}
}

\newtheorem{problem}{Problem}[section]
\newtheorem{theorem}{Theorem}[section]
\newtheorem{lemma}[theorem]{Lemma}

\newtheorem{corollary}[theorem]{Corollary}
\newtheorem{definition}[theorem]{Definition}
\newtheorem{remark}[theorem]{Remark}

\newcommand{\N}{\mathbb{N}}
\newcommand{\R}{\mathbb{R}}
\newcommand{\supp}{\operatorname{supp}}
\newcommand{\spancl}{\overline{\operatorname{span}}}

\title{Continuous Linear Surjections from $C_p(X)$ onto Symmetric Sequence Ideals in $c_0$}
\author{Jerzy K\k{a}kol\thanks{Faculty of Mathematics and Computer Science, Adam Mickiewicz University, 61-614 Pozna\'n, Poland. E-mail: \texttt{kakol@amu.edu.pl}.}
\and Wies\l aw \'{S}liwa\thanks{Faculty of Exact and Technical Sciences, University of Rzesz\'ow, 35-310 Rzesz\'ow, Poland. E-mail: \texttt{wsliwa@ur.edu.pl}; \texttt{sliwa@amu.edu.pl}.}}
\date{}

\begin{document}
\maketitle

\begin{abstract}
Rosenthal's   classical theorem says that, for every infinite compact space $X$, the Banach space $C(X)$ admits a quotient isomorphic to either $c_0$ or $\ell_2$.  The corresponding question for $C_p(X)$, the space $C(X)$ endowed with the topology of pointwise convergence, is much subtler and still open; the only compact spaces for which the existence of an infinite-dimensional metrizable quotient is not presently settled in ZFC are Efimov compacta.
We prove that the Banach space case cannot be re-produced with the usual sequence spaces carrying  their pointwise topologies:
Let $X$ be a Tychonoff space and let $E\subseteq c_0$ be a non-trivial symmetric sequence ideal endowed with the subspace topology inherited from $\mathbb{R}^{\mathbb N}$. Then  the existence of a continuous linear surjection $T:C_p(X)\rightarrow E_p$ implies $E=c_0$, where $E_p$ means  $E$  with the topology inherited from $\R^{\N}$. Hence, no proper non-zero symmetric sequence ideal of $c_0$ can be realized as a continuous linear image of a $C_p$-space. Combining this result with the characterization of the Josefson--Nissenzweig property for $C_p(X)$ obtained by Banakh, K\c{a}kol, and \'Sliwa, we derive a complete characterization of all pairs $(X,E)$ for which such a surjection exists. In particular, for every $0<q<\infty$, there is no continuous linear surjection $C_p(X)\rightarrow (\ell_q)_p.$
\end{abstract}

\noindent\textbf{2020 Mathematics Subject Classification.}
Primary 54C35, 46A03; Secondary 46B15, 46B45.

\medskip
\noindent\textbf{Key words and phrases.}
$C_p$-space, continuous linear surjection, symmetric sequence ideal, pointwise topology, Josefson--Nissenzweig property, unconditional basic sequence, Schur property.

\section{Introduction}

 All locally convex spaces in this paper are real and Hausdorff, and all topological spaces are Tychonoff, unless explicitly stated otherwise.  If $X$ is a Tychonoff  space, then $C_p(X)$ denotes the vector space $C(X)$ of all continuous real-valued functions on $X$ endowed with the pointwise topology.

 For a Tychonoff space $X$ and a point $x\in X$ let $\delta_x:C_p(X)\to\mathbb{R}, f\mapsto f(x),$ be the Dirac measure concentrated at $x$. The linear hull $L(X)$ of the set $\{\delta_x:x\in X\}$ in $\mathbb{R}^{C_p(X)}$ can be identified with the dual space of $C_p(X)$, see \cite[Proposition 0.5.7]{Arkh}.

Elements of the space $L(X)$ will be called {\em finitely supported sign-measures} (or simply {\em sign-measures}) on $X$.

Each $\mu\in L(X)$ can be uniquely written as a linear combination of Dirac measures $\mu=\sum_{x\in F}\alpha_x\delta_x$ for some finite set $F\subset X$ and some non-zero real numbers $\alpha_x$. The set $F$ is called the {\em support} of the sign-measure $\mu$ and is denoted by $\supp(\mu)$. The measure $\sum_{x\in F}|\alpha_x|\delta_x$ will be denoted by $|\mu|$ and the real number $\|\mu\|=\sum_{x\in F}|\alpha_x|$ coincides with the {\em norm} of $\mu$ (in the dual Banach space $C(\beta X)^*$).

The theorem of Rosenthal asserts that every infinite-dimensional Banach space $C(K)$ has a quotient isomorphic to $c_0$ or $\ell_2$; see \cite{Rosenthal1969}.  The corresponding problem for pointwise convergence is substantially more delicate.  K\c akol, Saxon  initiated a line of research that was later developed by Banach, K\c akol, \'Sliwa, Marciszewski, Sobota, W\'ojtowicz, Zdomskyy  and others, which asks when spaces $C_p(X)$ admit infinite-dimensional separable, or even metrizable, quotients; see for example  \cite{KakolSliwa2018, K-Saxon, BanakhKakolSliwa2018, BanakhKakolSliwa2019, KakolSliwa2018, MarciszewskiSobotaZdomskyy2025, KakolSobotaZdomskyy2023}.

\begin{problem}[K\c akol--Saxon, K\c akol--\'Sliwa]\label{prob:open}
Let $X$ be an infinite compact space.  Does $C_p(X)$ admit an infinite-dimensional separable quotient?  Does i the space $C_p(X)$  admit an infinite-dimensional metrizable quotient?
\end{problem}
The known positive results reduce a possible negative answer, for compact spaces, to the Efimov case $X$, see \cite{KakolSliwa2018}, where $X$ is an  Efimov compactum  if $X$  contains neither a copy of $\beta\N$ nor a non-trivial convergent sequence.  Indeed, if $X$ contains a non-trivial convergent sequence, then $C_p(X)$ has a complemented subspace isomorphic to $(c_0)_p$, see  \cite[Theorem 1]{BanakhKakolSliwa2019}; and if $X$ contains a copy of $\beta\N$, then $C_p(X)$ has a quotient isomorphic to $(\ell_\infty)_p$, see \cite[Theorem 1]{BanakhKakolSliwa2018}.  Consequently, a compact counterexample would necessarily be of Efimov type.
 The Problem \ref{prob:open}  is also closely connected with the two-disjoint-copies property studied in \cite{KakolKurkaSliwa2026} and \cite{KaniaKakol2026}.

The special role of $(c_0)_p$ is well understood. Banakh, K\k{a}kol, and \'{S}liwa proved that, for every Tychonoff space $X$, the following conditions are equivalent: $C_p(X)$ has the Josefson--Nissenzweig property, $C_p(X)$ contains a complemented copy of $(c_0)_p$, $C_p(X)$ has a quotient isomorphic to $(c_0)_p$, and $C_p(X)$ admits a continuous linear surjection onto $(c_0)_p$,  see \cite[Theorem 1]{BanakhKakolSliwa2018},  see also \cite{KakolSobotaZdomskyy2023},  \cite{MarciszewskiSobotaZdomskyy2025}  for  more recent new results around this property.
By contrast, the usual pointwise sequence spaces occur very abundantly as subspaces. Indeed,  K\k{a}kol, Molt\'o, and \'{S}liwa proved that every infinite $C_p(X)$ contains a subspace isomorphic to $(\ell_q)_p$ for every $0<q\le\infty$; see
\cite[Theorem 3.1]{KakolMoltoSliwa2023}. This makes the corresponding surjection problem particularly natural. For broader results on metrizable quotients of $C_p$-spaces, see \cite{BanakhKakolSliwa2019}.

The purpose of the present paper is to solve that problem for the entire class of symmetric sequence ideals contained in $c_0$. Our main result is the following  theorem.
\begin{theorem} \label{thm:main-intro} Let $X$ be a Tychonoff space and let $E\subseteq c_0$ be a non-zero symmetric sequence ideal. If there exists a continuous linear surjection
 $T:C_p(X)\rightarrow E_p,$ then $E=c_0$.
 \end{theorem}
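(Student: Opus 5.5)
We argue through the dual map. Let $T:C_p(X)\to E_p$ be a continuous linear surjection and let $e_n^*$ be the coordinate functionals on $E_p$; they are continuous. Put $\mu_n:=e_n^*\circ T\in L(X)$. For every $f\in C(X)$ we then have $Tf=(\mu_n(f))_{n\in\N}$. Surjectivity means that the set of sequences $\{(\mu_n(f))_n: f\in C(X)\}$ is exactly $E$. In particular $(\mu_n(f))_n\in E\subseteq c_0$ for every $f$, so $\mu_n\to0$ pointwise on $C_p(X)$.

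The first step is to show that $\inf_n\|\mu_n\|>0$, at least along a subsequence. Suppose instead that $\|\mu_{n_k}\|\to0$ along some subsequence. Since $E$ is a non-zero symmetric ideal, it contains all finitely supported sequences, and it is closed under taking subsequences and under reindexing (padding with zeros). Choose $f$ with $Tf=$ the sequence that equals $1$ at the indices $n_k$ and $0$ elsewhere, if such a sequence lies in $E$; in general, pick a non-zero element of $E$ and place it on $\{n_k\}$. This would force $|\mu_{n_k}(f)|\le\|\mu_{n_k}\|\,\sup|f|$ on the relevant support. That is not immediately contradictory, because $f$ may be unbounded. We therefore fall back on the standard argument of \cite{BanakhKakolSliwa2018}: the $\mu_n$ are weak$^*$ null, and after normalising and passing to a subsequence with ``almost disjoint'' supports we obtain a Josefson--Nissenzweig sequence $\nu_k=\mu_{n_k}/\|\mu_{n_k}\|$. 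The point is that either the norms stay bounded below, or the surjectivity onto the sequences $1_{\{n_k\}}\cdot a$ fails by an open-mapping-type argument. I expect this to be handled with the Baire-type lemma that a continuous linear image of $C_p(X)$ determines bounded sets.

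The main step is to show that $E$ contains every $c_0$ sequence, by constructing preimages. Using a Josefson--Nissenzweig-type subsequence (supports of $\mu_{n_k}$ essentially disjoint, $\|\mu_{n_k}\|\ge\varepsilon$, the $\mu_{n_k}$ pointwise null), a gliding-hump construction produces functions $f_k\in C(X)$. They have disjoint cozero sets that are locally finite on a neighbourhood structure, they satisfy $\mu_{n_k}(f_k)=1$, and $\mu_{n_j}(f_k)$ is small for $j\ne k$. Given $a\in c_0$, the function $f=\sum_k a_k f_k$ is then continuous; here local finiteness is the hardest point, and I would achieve it by choosing the humps inside pairwise disjoint open sets obtained from a discrete family, which is possible since the supports eventually escape every finite set. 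Its image satisfies $Tf\approx a$ on the indices $n_k$, up to a small perturbation matrix. Inverting $I+\text{small}$ on $c_0$ yields a $g$ with $Tg$ restricted to $\{n_k\}$ equal to $a$ exactly. The ideal property of $E$ (solidity: $|b|\le|c|$, $c\in E\Rightarrow b\in E$) together with symmetry then transfers this to all of $c_0$, giving $c_0\subseteq E$ and hence $E=c_0$.

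The obstacle I anticipate is exactly this construction: it requires the sums $\sum a_k f_k$ to be continuous and the perturbation to be controlled in sup norm. Both rely on choosing supports escaping to infinity, which is the content of the Banakh--K\k{a}kol--\'Sliwa characterization of the Josefson--Nissenzweig property. Alternatively, one can invoke that characterization directly: by the first step, $C_p(X)$ has the Josefson--Nissenzweig property, hence $C_p(X)$ maps onto $(c_0)_p$ via a map whose coordinate functionals are the $\nu_k$, and comparing the two factorizations gives the same conclusion.
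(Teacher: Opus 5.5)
Your proposal has two genuine gaps, and the second cannot work as written. First, the lower bound $\inf_n\|\mu_n\|>0$ is never established. You note yourself that a preimage $f$ may be unbounded, and then defer to an ``open-mapping-type argument''. No open mapping theorem is available for maps out of $C_p(X)$, and without this bound the normalised $\nu_k=\mu_{n_k}/\|\mu_{n_k}\|$ need not be pointwise null, so you do not obtain a Josefson--Nissenzweig sequence. The missing idea is the passage to the Banach space $C_b(X)$. One must prove that $S=\bigcup_n\supp\mu_n$ is functionally bounded, so that $T(C_b(X))=E$. The paper does this by applying Banach--Steinhaus to $\varphi\mapsto\mu_n(\varphi\circ\Phi)$ on the Fr\'echet space $C_k(\R^2)$, where $\Phi=(f,h)$ and $h\in C_b(X)$ is injective on $S$. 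Only then can $E$ carry the quotient Banach norm making $T_b:C_b(X)\to E$ a bounded surjection, which gives $\|\mu_n\|=\|T_b^*\delta_n\|\ge c\|\delta_n\|_{E^*}$. Even then, $\inf_n\|\delta_n\|_{E^*}>0$ still needs the closed-graph boundedness of coordinate permutations on $E$.

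Second, the gliding hump fails as described. Suppose the cozero sets of the $f_k$ formed a locally finite (e.g.\ discrete) family. Then $f=\sum_k a_kf_k$ would be continuous for \emph{every} bounded $a$, in particular for $a=(1,1,\dots)$. Inverting $I+P$ on $c_0$ by a Neumann series needs $\sup_j\sum_{k\ne j}|\mu_{n_j}(f_k)|<1$, so you would get $|(Tf)_{n_j}|$ bounded away from $0$, contradicting $Tf\in c_0$. Another way to see this: $S$ is functionally bounded, and such a set meets only finitely many members of a discrete family of open sets. The correct mechanism is uniform convergence of $\sum_k a_kf_k$, which comes from disjoint cozero sets, $\sup_k\|f_k\|_\infty<\infty$ and $a\in c_0$.

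Nor do the supports escape finite sets. For a convergent sequence $x_n\to x_\infty$ and $\mu_n=\delta_{x_n}-\delta_{x_\infty}$, every support contains $x_\infty$ with weight $-1$. A working version would have to split $\mu_{n_k}\approx\lambda+\alpha_k$, with a common part $\lambda\in\ell_1(S)$ and disjointly supported blocks $\alpha_k$. It would then have to separate most of the mass of the $\alpha_k$ by pairwise disjoint open sets avoiding the bulk of $\lambda$. Exact separation can fail for every subsequence: for the dyadic rationals of exact level $k$ in $[0,1]$, no subsequence has pairwise disjoint neighbourhoods. So this is a real lemma you have not supplied, and it is the heart of your argument.

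Your ``alternative'' is empty. The Josefson--Nissenzweig property only yields \emph{some} surjection onto $(c_0)_p$, built from modified functionals, and says nothing about the range of your $T$. A convergent sequence $X$ has this property, yet surjections onto $(\ell_q)_p$ must still be excluded.

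The paper avoids all this topology. After the reduction to $C_b(X)$, the coordinate functionals $(\delta_n)$ form a seminormalized unconditional basic sequence in $E^*$, with $T_b^*\delta_n=\mu_n\in\overline{\operatorname{span}}\{\delta_x:x\in S\}\cong\ell_1(S)$. The Schur property excludes the weakly null alternative of the unconditional dichotomy. The $\ell_1$ alternative gives $\|x\|_E\le C\|x\|_\infty$ on $c_{00}$, hence $c_0\subseteq E$.
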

Thus $(c_0)_p$ is the only non-zero symmetric sequence ideal in $c_0$ which can occur as a continuous linear image of a $C_p$-space.
\begin{corollary}\label{cor:lq}
Let $X$ be a Tychonoff space and let $0<q<\infty$. There is no continuous linear surjection
$C_p(X)\rightarrow(\ell_q)_p.$
\end{corollary}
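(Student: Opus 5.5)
The plan is to deduce Corollary~\ref{cor:lq} directly from Theorem~\ref{thm:main-intro}. We show that, for each $0<q<\infty$, the space $\ell_q$ is a non-zero symmetric sequence ideal contained in $c_0$ and different from $c_0$. If a continuous linear surjection $T:C_p(X)\to(\ell_q)_p$ existed, Theorem~\ref{thm:main-intro} would force $\ell_q=c_0$, which is a contradiction.

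\textbf{Step 1: $\ell_q$ is a symmetric sequence ideal in $c_0$.}
\begin{itemize}[leftmargin=*]
\item \emph{Containment.} If $\sum_n|x_n|^q<\infty$, then $|x_n|\to0$, so $\ell_q\subseteq c_0$.
\item \emph{Linear subspace.} For $q\ge1$ this is Minkowski's inequality. For $0<q<1$ use the elementary inequality $|a+b|^q\le|a|^q+|b|^q$. In both cases $\ell_q$ is closed under sums, and clearly under scalar multiples.
\item \emph{Solidity.} If $|y_n|\le|x_n|$ for all $n$ and $x\in\ell_q$, then $\sum|y_n|^q\le\sum|x_n|^q<\infty$, so $y\in\ell_q$.
\item \emph{Symmetry.} The sum $\sum|x_n|^q$ is invariant under any permutation of $\N$, since the series has non-negative terms.
\item \emph{Non-triviality.} $\ell_q$ contains the unit vector $e_1$, so it is non-zero.
\end{itemize}
These are the axioms of a symmetric sequence ideal, whatever the precise formulation the paper gives in its definition section.

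\textbf{Step 2: $\ell_q\neq c_0$.} Take $x_n=1/\log(n+1)$. Then $x\in c_0$, but $x_n^q=(\log(n+1))^{-q}$ eventually dominates $1/n$. Hence $\sum x_n^q=\infty$ and $x\notin\ell_q$.

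\textbf{Step 3: conclusion.} Suppose some continuous linear surjection $T:C_p(X)\to(\ell_q)_p$ exists, where $(\ell_q)_p$ carries the topology inherited from $\R^{\N}$. By Step 1, $E=\ell_q$ satisfies the hypotheses of Theorem~\ref{thm:main-intro}, so the theorem gives $\ell_q=c_0$. This contradicts Step 2.

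There is no real obstacle: all the depth sits in Theorem~\ref{thm:main-intro}. The only point needing care is the quasi-Banach range $0<q<1$, where subadditivity of $t\mapsto t^q$ replaces the triangle inequality in proving that $\ell_q$ is a linear space.
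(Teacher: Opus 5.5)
Your proposal is correct and follows the paper's route: the paper also observes that $\ell_q$ is a non-zero proper symmetric sequence ideal in $c_0$ and then invokes Theorem~\ref{thm:main-intro} (via Corollary~\ref{cor:no-proper}). The one place to tighten is your hedge ``whatever the precise formulation''. Under the paper's Definition~\ref{def:symmetric}, the identity $\sum_n|y_n|^q=\sum_n(y_n^*)^q$ holds because $y^*$ is a rearrangement of the non-zero moduli of $y$, padded with zeros, so $y^*\le x^*$ and $x\in\ell_q$ give $y\in\ell_q$ directly.
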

Here, for $0<q<\infty$, the notation $(\ell_q)_p$ refers to the topology inherited from $\R^{\N}$, not to the norm or quasi-norm topology.

The structural part of the proof of Theorem \ref{thm:main-intro} is isolated in Theorem~\ref{thm:abstract-rigidity}, which may be useful beyond the present setting. Although Theorem \ref{thm:main-intro} already excludes a surjection onto $(c_{00})_p$, the following stronger statement is useful in its own right.  We prove the following mentioned
\begin{theorem}\label{thm:c00} Every continuous linear operator
 $T:C_p(X)\rightarrow(c_{00})_p$
has finite-dimensional range.
\end{theorem}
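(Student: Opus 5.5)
We prove Theorem~\ref{thm:c00} by describing $T$ through its coordinate functionals and then using a Baire category argument to find a single level $n_0$ beyond which all coordinates of $T$ vanish.

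\textbf{Coordinates of $T$.} For each $n\in\N$ let $e_n^*:(c_{00})_p\to\R$ be the $n$-th coordinate functional; it is continuous. Hence $\mu_n:=e_n^*\circ T$ is a continuous linear functional on $C_p(X)$, so $\mu_n\in L(X)$ is a finitely supported sign-measure. Thus $Tf=(\mu_n(f))_{n\in\N}$, and the hypothesis says exactly that for every $f\in C(X)$ the sequence $(\mu_n(f))_n$ is eventually zero. It suffices to find $n_0$ such that $\mu_n=0$ for all $n\ge n_0$, since then the range of $T$ lies in $\R^{n_0}$.

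\textbf{Choice of a Baire space.} Put $A_k=\{f: \mu_n(f)=0 \text{ for all } n\ge k\}$. We have $C(X)=\bigcup_k A_k$, but $C_p(X)$ is not Baire in general, so we pass to a Banach space. Let $S=\bigcup_n\supp(\mu_n)$, which is countable. Consider the Banach space $\ell_\infty(S)$ (or $C_b$ restricted appropriately). We will instead build a complete metric family of test functions directly, as follows.

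\textbf{Reduction by contradiction.} Suppose infinitely many $\mu_n$ are nonzero. Passing to a subsequence, we get indices $n_1<n_2<\dots$ with $\mu_{n_j}\neq0$. We construct inductively functions $g_j\in C_b(X)$ with $\|g_j\|_\infty\le 2^{-j}$ and scalars such that the sum $f=\sum_j g_j$ satisfies $\mu_{n_j}(f)\neq0$ for all $j$, which contradicts $Tf\in c_{00}$.

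The inductive step is the delicate point. Fix $j$ and suppose $g_1,\dots,g_{j-1}$ have been chosen. Since $\mu_{n_j}\ne0$, there is a bounded continuous $h$ with $\mu_{n_j}(h)\neq0$. Consider $t\mapsto \mu_{n_j}(g_1+\dots+g_{j-1}+th)$. This is a nonconstant affine function of $t$, so it vanishes for at most one value of $t$. We must also ensure that later perturbations do not destroy the value already achieved. To arrange this, each $\mu_{n_i}$ with $i\le j$ has a finite support, so $|\mu_{n_i}(\sum_{k>j} g_k)|\le \|\mu_{n_i}\|\sum_{k>j}\|g_k\|_\infty$. We choose $\|g_k\|_\infty$ so small that this tail stays below $\tfrac12|\mu_{n_i}(g_1+\dots+g_i)|$ for every $i<k$; this is finitely many constraints at each step. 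We also pick $t$ to avoid the single bad value, with $|t|$ small. The series converges uniformly, so $f$ is continuous, and $|\mu_{n_i}(f)|\ge\tfrac12|\mu_{n_i}(g_1+\dots+g_i)|>0$ for all $i$.

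The main obstacle is this bookkeeping. Each new $g_j$ must be small enough not to spoil the earlier nonzero values, yet still make $\mu_{n_j}$ nonzero. This works because the affine-in-$t$ trick lets us choose $t$ arbitrarily small. Once this is in place, the resulting contradiction proves that only finitely many $\mu_n$ are nonzero, so $T$ has finite-dimensional range.
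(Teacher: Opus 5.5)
Your proof is correct, but it takes a different route from the paper.

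\textbf{The paper's route.} The paper restricts the coordinate functionals $\mu_n$ to the Banach space $C_b(X)$ and covers $C_b(X)$ by the closed subspaces $F_N=\bigcap_{n>N}\ker(\mu_n|_{C_b(X)})$. The Baire category theorem then gives $F_N=C_b(X)$ for some $N$. Since $C_b(X)$ separates finite sets, this yields $\mu_n=0$ for $n>N$. The paper also invokes Theorem~\ref{thm:functionally-bounded} to get $T(C(X))=T(C_b(X))$, although that step is not actually needed for the conclusion.

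\textbf{Your route.} You run a direct gliding-hump construction inside $C_b(X)$. Assuming infinitely many $\mu_{n_j}\neq 0$, you build a uniformly convergent series $f=\sum_j g_j$ with $\mu_{n_j}(f)\neq0$ for every $j$, so that $Tf\notin c_{00}$. This is essentially the constructive content of Baire's theorem specialized to this situation. It buys self-containedness and an explicit witness $f$, at the cost of bookkeeping that the paper's two-line Baire argument avoids.

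\textbf{Presentation fixes.}
\begin{enumerate}
\item Your opening sentence and the ``Choice of a Baire space'' paragraph announce a Baire argument that you then abandon, so they should be removed.
\item The smallness condition must control the whole tail, not just the single new term. For example, put $\varepsilon_i=|\mu_{n_i}(g_1+\dots+g_i)|/(2\|\mu_{n_i}\|)$ and require $\|g_k\|_\infty\le 2^{-(k-i)}\varepsilon_i$ for all $i<k$. This gives $\sum_{k>i}\|g_k\|_\infty\le\varepsilon_i$, hence $|\mu_{n_i}(\sum_{k>i}g_k)|\le\tfrac12|\mu_{n_i}(g_1+\dots+g_i)|$.
\item The existence of $h\in C_b(X)$ with $\mu_{n_j}(h)\neq0$ deserves one line. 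Take any $f$ with $\mu_{n_j}(f)\neq 0$ and truncate it at a level exceeding $\max_{\supp\mu_{n_j}}|f|$.
\end{enumerate}
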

Together with the characterization from \cite[Theorem 1]{BanakhKakolSliwa2019}, our  Theorem~\ref{thm:main-intro} gives the following complete statement.
\begin{corollary}\label{cor:classification}
Let $X$ be a Tychonoff space and let $E\subseteq c_0$ be a non-zero symmetric sequence ideal. The following conditions are equivalent:
\begin{enumerate}[label=\textup{(\roman*)}]
\item there exists a continuous linear surjection $C_p(X)\to E_p$;
\item $E=c_0$ and $C_p(X)$ has the Josefson--Nissenzweig property;
\item $E=c_0$ and $C_p(X)$ contains a complemented subspace isomorphic to $(c_0)_p$;
\item $E=c_0$ and $C_p(X)$ has a quotient isomorphic to $(c_0)_p$.
\end{enumerate}
\end{corollary}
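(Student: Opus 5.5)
We prove Corollary \ref{cor:classification} by closing the cycle of implications, using Theorem \ref{thm:main-intro} for the only new implication and the Banakh--K\k{a}kol--\'Sliwa characterization for the rest.

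The plan is to show (i)$\Rightarrow$(ii) directly and to obtain (ii)$\Leftrightarrow$(iii)$\Leftrightarrow$(iv), together with the return implication to (i), from \cite[Theorem 1]{BanakhKakolSliwa2018}.

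For (i)$\Rightarrow$(ii), let $T:C_p(X)\to E_p$ be a continuous linear surjection.
\begin{itemize}
\item Since $E\subseteq c_0$ is a non-zero symmetric sequence ideal, Theorem \ref{thm:main-intro} gives $E=c_0$.
\item Then $T$ is a continuous linear surjection of $C_p(X)$ onto $(c_0)_p$.
\item By \cite[Theorem 1]{BanakhKakolSliwa2018}, the existence of such a surjection is equivalent to the Josefson--Nissenzweig property of $C_p(X)$. Hence (ii) holds.
\end{itemize}

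The equivalences (ii)$\Leftrightarrow$(iii)$\Leftrightarrow$(iv) are read off from the same theorem. Under the standing assumption $E=c_0$, it states that for every Tychonoff $X$ the following are equivalent: the JN property of $C_p(X)$; $C_p(X)$ containing a complemented copy of $(c_0)_p$; and $C_p(X)$ having a quotient isomorphic to $(c_0)_p$.

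To return to (i), we use (iv).
\begin{itemize}
\item Assume $E=c_0$ and $Q:C_p(X)\to C_p(X)/Z$ is the quotient map onto a quotient isomorphic to $(c_0)_p$.
\item Let $J:C_p(X)/Z\to(c_0)_p$ be that topological isomorphism.
\item Then $J\circ Q$ is a continuous linear surjection onto $(c_0)_p=E_p$, which is (i).
\end{itemize}
The implication (iii)$\Rightarrow$(iv) can alternatively be seen directly: a projection onto a complemented copy of $(c_0)_p$ is an open continuous surjection.

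There is no real obstacle here, since all the depth is in Theorem \ref{thm:main-intro}. The one point to handle carefully is the citation. The introduction attributes the characterization both to \cite[Theorem 1]{BanakhKakolSliwa2018} and to \cite[Theorem 1]{BanakhKakolSliwa2019}. We should cite the version that includes the continuous-surjection clause, since (i)$\Rightarrow$(ii) needs exactly that clause. Quotients alone would not suffice, because a continuous linear surjection need not be open.
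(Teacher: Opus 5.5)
Your proposal is correct and follows the paper's proof: Theorem~\ref{thm:main-intro} forces $E=c_0$, and the rest comes from the surjection/complemented-copy/quotient characterization in \cite[Theorem~1]{BanakhKakolSliwa2018}. Your explicit arguments for (iv)$\Rightarrow$(i), composing the quotient map with the isomorphism, and for (iii)$\Rightarrow$(iv), using that a continuous projection is open onto its range, only spell out steps the paper leaves to that cited theorem.
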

The space $C_p(X)$ has the \emph{Josefson--Nissenzweig property} if there exists a sequence $(\nu_n)\subseteq C_p(X)^*$ such that
$\|\nu_n\|=1$  for $n\in\N,$
and $\nu_n(f)\rightarrow0$ for $f\in C(X),$ see \cite[Definition 1]{BanakhKakolSliwa2019}.

For $x=(x_n)\in c_0$, define its \emph{decreasing rearrangement}
$x^*=(x_n^*)_{n=1}^{\infty}$ by
\begin{equation}\label{eq:rearrangement}
 x_n^*
 =\inf_{\substack{F\subseteq\N\\ |F|<n}}
   \sup_{k\notin F}|x_k|
 \qquad(n\in\N),
\end{equation}
see, for example, \cite{BennettSharpley}. Thus $x_n^*$ is the $n$th largest modulus in the generalized sense encoded by \eqref{eq:rearrangement}. Then we have
$x_1^*\ge x_2^*\ge\cdots\ge0,\,\,\, x_n^*\rightarrow0.$
The word ``rearrangement'' does not mean that $x^*$ must be obtained by a permutation of all terms of $x$.
\begin{remark}\label{rem:zeros}
Let $x=(x_n)\in c_0.$ Let $s_0=0.$ $\\$
We can choose positive integers $s_1, s_2, s_3, \ldots$ such that $s_n\in (\N\setminus \{s_0, \dots, s_{n-1}\})$ and $|x_{s_n}|=\max_{k\in (\N\setminus \{s_0, \dots, s_{n-1}\})} |x_k|$ for $n\in \N.$ Then  $x^*=(x^*_n)=(|x_{s_n}|)$.
\end{remark}
\begin{remark}
Let $A=\{n\in \N: x_n\neq 0\}.$ If $A=\emptyset$, then $x^*=(0, 0, 0, \ldots )=x$. If $A=\{m_1, \ldots, m_k\}$ is non-empty and finite, and $(|x_{m_{\sigma (1)}}|, \ldots, |x_{m_{\sigma (k)}}|)$ is the non-increasing permutation of the sequence $(|x_{m_1}|, \ldots, |x_{m_k}|)$, then $x^*=(|x_{m_{\sigma (1)}}|, \ldots, |x_{m_{\sigma (k)}}|, 0, 0, 0 \ldots)$. If $A=\{m_k: k\in \N\}$ is infinite, then $x^*=(x^*_n)$ is the non-increasing permutation $(x_{m_{\sigma(k)}})$ of the sequence $(x_{m_k}).$ \end{remark}
We start with the following main
\begin{definition}\label{def:symmetric}
A linear subspace $E\subseteq c_0$ is called a \emph{symmetric sequence ideal} if:
$ x\in E, y\in c_0,$ with $y_n^*\le x_n^*$ for every $n$, then  $y\in E.$
\end{definition}
The definition is algebraic and order-theoretic. We collect a few properties of symmetric sequence ideals.
\begin{lemma}\label{lem:ideal-properties}
Let $E\subseteq c_0$ be a symmetric sequence ideal. Then:
\begin{enumerate}[label=\textup{(\roman*)}]
\item $E$ is solid: if $x\in E$ and $|y_n|\le |x_n|$ for all $n$, then $y\in E$;
\item $E$ is invariant under permutations and changes of signs;
\item every subsequence of a member of $E$ belongs to $E$;
\item if $a\in\ell_\infty$ and $x\in E$, then $(a_nx_n)\in E$;
\item if $E\ne\{0\}$, then $c_{00}\subseteq E$.
\end{enumerate}
\end{lemma}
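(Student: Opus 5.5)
Everything will be derived from the single monotonicity property in Definition~\ref{def:symmetric}: if $y\in c_0$ and $y^*_n\le x^*_n$ for all $n$ with $x\in E$, then $y\in E$. So for each item it suffices to compare decreasing rearrangements, using the description of $x^*$ via \eqref{eq:rearrangement} and Remark~\ref{rem:zeros}. The key observation is that $x^*_n$ depends only on the family of moduli $(|x_k|)$, and is monotone in it. If $|y_k|\le|x_k|$ for every $k$, then for every finite $F$ we have $\sup_{k\notin F}|y_k|\le\sup_{k\notin F}|x_k|$. Taking the infimum over $|F|<n$ gives $y^*_n\le x^*_n$.

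For (i), the monotonicity just established is all that is needed, since $y\in c_0$ follows from $|y_n|\le|x_n|\to0$. For (ii), a permutation $\pi$ maps the finite sets with $|F|<n$ bijectively onto themselves via $F\mapsto\pi(F)$. Also, sign changes do not affect the moduli. Hence the rearrangement of the permuted or sign-changed sequence equals $x^*$, and the definition applies. For (iii), let $y=(x_{m_k})$ be a subsequence. Given $F\subseteq\N$ with $|F|<n$, set $G=\{m_k:k\in F\}$; then $|G|<n$ and $\sup_{k\notin F}|x_{m_k}|\le\sup_{j\notin G}|x_j|$. So $y^*_n\le x^*_n$, and moreover $y\in c_0$. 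For (iv), put $M=\sup_n|a_n|$. Then $|a_nx_n|\le|Mx_n|$, and $Mx\in E$ because $E$ is a linear subspace, so (i) gives $(a_nx_n)\in E$.

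For (v), I take $x\in E$ with $x\ne0$ and choose $k$ with $x_k\ne0$. By (i), $e_k=x_k^{-1}\cdot(x_k e_k)$ lies in $E$: indeed $|x_ke_k|\le|x|$ coordinatewise, and $E$ is a linear subspace. Then (ii) gives $e_j\in E$ for every $j$, via the transposition of $k$ and $j$. Linearity then yields $c_{00}=\operatorname{span}\{e_j\}\subseteq E$.

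The only point needing any care is (iii) (and the permutation part of (ii)): one has to check the inequality directly from the inf--sup formula \eqref{eq:rearrangement}, without assuming that $x^*$ arises from a permutation, since sequences with infinitely many zeros are allowed. I expect the reindexing of the excluded finite set $F$ to handle this cleanly, so no real obstacle should arise.
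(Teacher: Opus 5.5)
Your overall approach is the paper's: everything is reduced to comparing decreasing rearrangements through \eqref{eq:rearrangement}. Your arguments for (i), (ii), (iv) and (v) are correct. The paper does (iv) via $((a_nx_n))^*_k\le\|a\|_\infty x^*_k$ and (v) via $x^*_1e_1\in E$, which differ from yours only cosmetically.

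The problem is in (iii), which is exactly the step you single out as needing care: your reindexing goes in the wrong direction. To prove $y^*_n\le x^*_n=\inf_{|G|<n}\sup_{j\notin G}|x_j|$, you must produce, for every $G$ with $|G|<n$, some $F$ with $|F|<n$ and $\sup_{k\notin F}|y_k|\le\sup_{j\notin G}|x_j|$. You instead start from $F$ and pass to $G=\{m_k:k\in F\}$. That only yields $y^*_n\le\inf_F\sup_{j\notin m(F)}|x_j|$, which is an infimum over the restricted family of sets $G\subseteq\{m_k:k\in\N\}$ and can be strictly larger than $x^*_n$. For instance, take $x=e_1$, $m_k=k+1$ and $n=2$. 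No set $m(F)$ contains $1$, so that infimum equals $1$, while $x^*_2=0$. In (ii) the same direction is harmless only because $F\mapsto\pi(F)$ is a bijection of the family $\{F:|F|<n\}$; the map $F\mapsto m(F)$ is not onto that family.

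The repair is one line. Given $G$ with $|G|<n$, put $F=\{k:m_k\in G\}$. Since $k\mapsto m_k$ is injective, $|F|\le|G|<n$. Moreover $k\notin F$ implies $m_k\notin G$, so $\sup_{k\notin F}|x_{m_k}|\le\sup_{j\notin G}|x_j|$. Hence $y^*_n\le\sup_{j\notin G}|x_j|$ for every such $G$, and taking the infimum over $G$ gives $y^*_n\le x^*_n$. With this correction your proof is complete.
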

\begin{proof}
Assertions (i)--(iii) follow directly from the inequalities between decreasing rearrangements. For (iv),
\[
 \bigl((a_nx_n)\bigr)^*_k\le\|a\|_\infty x_k^*
 \qquad(k\in\N),
\]
and $\|a\|_\infty x\in E$.

To prove (v), choose $0\ne x\in E$. Then $x_1^*>0$, and symmetry gives $x_1^*e_1\in E$. Hence $e_1\in E$. Permutation invariance gives $e_n\in E$ for every $n$, and linearity yields $c_{00}\subseteq E$.
\end{proof}

\section{Theorem \ref{thm:functionally-bounded} and its proof}\label{sec:ideals}
Recall that a  subset $A\subseteq X$ is called \emph{functionally bounded} if every $f\in C(X)$ is bounded on $A$.
In order to prove  Theorem   \ref{thm:main-intro} we will need the following crucial
\begin{theorem}\label{thm:functionally-bounded}
Let $(\mu_n)\subseteq C_p(X)^*$ satisfy
$\sup_{n\in\N}|\mu_n(f)|<\infty$ for every $f\in C(X).$
Then the set
$ S=\bigcup_{n=1}^{\infty}\supp\mu_n$ is functionally bounded in $X$.  Moreover, for every $f\in C(X)$ there exists $g\in C_b(X)$ such that
$\mu_n(g)=\mu_n(f)$ for $n\in\N.$
\end{theorem}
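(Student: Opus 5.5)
The plan is to argue by contradiction with a gliding-hump construction of a single continuous function on which the $\mu_n$ are unbounded. The "moreover" part then follows by truncation. Suppose $S$ is not functionally bounded, and fix $f\in C(X)$ unbounded on $S$.

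\emph{Step 1: a discrete family.} Choose points $y_1,y_2,\dots\in S$ with $|f(y_{i+1})|>|f(y_i)|+1$. Put $V_i=f^{-1}\bigl((f(y_i)-\tfrac13,f(y_i)+\tfrac13)\bigr)$. The family $\{V_i\}$ is discrete in $X$: a neighbourhood $f^{-1}((f(z)-\tfrac13,f(z)+\tfrac13))$ of any $z\in X$ meets at most one $V_i$. Let $D=\{y_i:i\in\N\}$. For each $y\in D$ fix some $n(y)$ with $y\in\supp\mu_{n(y)}$.

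\emph{Step 2: inductive choice.} We choose points $x_k\in D$, indices $n_k=n(x_k)$, open sets $U_k$ and bump functions $\varphi_k$ as follows.
\begin{itemize}
\item Pick $x_k$ outside the finite set $\bigcup_{j<k}\supp\mu_{n_j}$. This is possible because $D$ is infinite.
\item Take $U_k\subseteq$ the $V_i$ containing $x_k$, an open neighbourhood of $x_k$ that misses the finite set $\bigl(\bigcup_{j<k}\supp\mu_{n_j}\cup\supp\mu_{n_k}\bigr)\setminus\{x_k\}$.
\item Take $\varphi_k\in C(X)$ with $0\le\varphi_k\le1$, $\varphi_k(x_k)=1$ and $\varphi_k=0$ off $U_k$ (Tychonoff).
\end{itemize}
Writing $\alpha_k\neq0$ for the coefficient of $\delta_{x_k}$ in $\mu_{n_k}$, we get:
\begin{itemize}
\item $\mu_{n_k}(\varphi_k)=\alpha_k$;
\item $\mu_{n_k}(\varphi_j)=0$ for $j>k$, since $U_j$ misses $\supp\mu_{n_k}$;
\item the indices $n_k$ are pairwise distinct, since $x_k\notin\supp\mu_{n_j}$ for $j<k$.
\end{itemize}

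\emph{Step 3: the bad function.} Choose reals $c_k$ inductively so that
\[
|c_k\alpha_k|>k+\Bigl|\sum_{j<k}c_j\mu_{n_k}(\varphi_j)\Bigr|.
\]
Put $g=\sum_k c_k\varphi_k$. The supports of the $\varphi_k$ lie in the members of the discrete family $\{V_i\}$, so the sum is locally finite and $g\in C(X)$. Then
\[
\mu_{n_k}(g)=\sum_{j\le k}c_j\mu_{n_k}(\varphi_j),
\]
hence $|\mu_{n_k}(g)|>k$. This contradicts $\sup_n|\mu_n(g)|<\infty$. So $S$ is functionally bounded.

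\emph{Moreover part.} Given $f\in C(X)$, let $M=\sup_S|f|<\infty$ and set $g=\max(-M,\min(f,M))\in C_b(X)$. Then $g=f$ on $S\supseteq\supp\mu_n$, so $\mu_n(g)=\mu_n(f)$ for every $n$.

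The main obstacle is the bookkeeping in Step 2: each $U_k$ must avoid the earlier supports and the other atoms of $\mu_{n_k}$, so that the matrix $(\mu_{n_k}(\varphi_j))$ is lower triangular with nonzero diagonal. The discreteness obtained in Step 1 is exactly what guarantees that $g$ is continuous.
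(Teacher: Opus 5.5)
Your proof is correct up to one small numerical slip, and it takes a genuinely different route from the paper.

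\textbf{The slip.} You only know $|f(y_i)-f(y_j)|>1$ for $i\neq j$. Under that hypothesis the neighbourhood $f^{-1}\bigl((f(z)-\tfrac13,f(z)+\tfrac13)\bigr)$ can meet two of the $V_i$: take $X=\R$, $f$ the identity, $f(y_1)=0$, $f(y_2)=1.1$, $f(z)=0.55$. Shrinking the radius to $\tfrac16$, or requiring gaps larger than $2$, fixes this. The discreteness of $\{V_i\}$ itself is true anyway, because the intervals are pairwise more than $\tfrac13$ apart.

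It is also worth saying explicitly that each $V_i$ contains exactly one point of $D$. Hence distinct $x_k$ lie in distinct $V_i$, and at most one $\varphi_k$ is non-zero near any point. This is what makes $g$ continuous and lets you evaluate $\mu_{n_k}(g)$ termwise.

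\textbf{How the paper argues.} The paper does not build a bad function by hand.
\begin{enumerate}[label=\textup{(\arabic*)}]
\item It uses the Baire category theorem in $C_b(X)$ to find $h\in C_b(X)$ injective on the countable set $S$.
\item It transfers the $\mu_n$ to functionals $\lambda_n(\varphi)=\mu_n(\varphi\circ\Phi)$ on the Fr\'echet space $C_k(\R^2)$, where $\Phi=(f,h)$.
\item Banach--Steinhaus there gives one compact $K_0\subset\R^2$ and a constant $C$ controlling all $\lambda_n$ simultaneously.
\item A bump function on $\R^2$ then shows $\Phi(S)\subseteq K_0$. Injectivity of $h$ is exactly what stops distinct atoms of a single $\mu_n$ from collapsing under $\Phi$.
\end{enumerate}

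\textbf{Comparison.} The paper outsources the gliding hump to the uniform boundedness principle in a metrizable barrelled space, at the cost of the auxiliary injective function $h$. You perform the hump directly in $X$. The discrete family $\{f^{-1}(I_i)\}$ plays the role of the compact set $K_0$, and choosing $U_k$ to avoid the other atoms of $\mu_{n_k}$ plays the role of the injectivity of $h$.

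Your argument is more elementary and self-contained: it needs only Tychonoff separation and a lower-triangular construction, with no Baire category in $C_b(X)$ and no $C_k(\R^2)$. It also exhibits an explicit $g\in C(X)$ on which $(\mu_n)$ is unbounded. The paper's argument is softer, avoiding all the bookkeeping, and yields a uniform equicontinuity estimate with a single compact set. The ``moreover'' part is handled identically in both, by truncating $f$ at $\sup_S|f|$.
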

\begin{proof}
The set $S$ is at most countable. We first choose a function $h\in C_b(X)$ which is one-to-one on $S$. If $S$ is finite, this is clear. If $S$ is infinite, for distinct $x,y\in S$ put
\[
 H_{x,y}=\{u\in C_b(X):u(x)=u(y)\}.
\]
Each $H_{x,y}$ is a closed proper linear subspace of the Banach space $C_b(X)$ and hence has empty interior. Since the set $S \times S$ is countable, the Baire category theorem gives
\[
 C_b(X)\neq \bigcup_{x\ne y}H_{x,y}.
\]
Any function $h \in (C_b(X)\setminus\bigcup_{x\ne y}H_{x,y})$ is one-to-one on $S$.

Fix $f\in C(X)$ and define
\[
 \Phi:X\rightarrow\R^2,
 \qquad
 \Phi(x)=(f(x),h(x)).
\]
Let $C_k(\R^2)$ denote the Fr\'echet space of continuous functions on $\R^2$ with the compact-open topology. For every $n$, define
\[
 \lambda_n:C_k(\R^2)\rightarrow\R,
 \qquad
 \lambda_n(\varphi)=\mu_n(\varphi\circ\Phi).
\]
Because $\mu_n$ is finitely supported, $\lambda_n$ is continuous. For every $\varphi\in C_k(\R^2)$, the scalar sequence
\[
 \bigl(\lambda_n(\varphi)\bigr)_{n=1}^{\infty}
 =\bigl(\mu_n(\varphi\circ\Phi)\bigr)_{n=1}^{\infty}
\]
is bounded. Hence $(\lambda_n)$ is pointwise bounded. The Banach--Steinhaus theorem yields an equicontinuity estimate: there exist a compact set $K_0 \subset \R^2$ and $C>0$ such that
\begin{equation}\label{eq:equicontinuity}
 |\lambda_n(\varphi)|
 \le C\sup_{z\in K_0}|\varphi(z)|
 \qquad \mbox{for all}\; n\in\N,\ \varphi\in C_k(\R^2).
\end{equation}

We claim that
\begin{equation}\label{eq:PhiS}
 \Phi(S)\subseteq K_0.
\end{equation}
Suppose otherwise. Choose $n$ and $x\in\supp\mu_n$ such that $p=\Phi(x)\notin K_0$. Write
\[
 \mu_n=\sum_{y\in F_n}a_y\delta_y, 
\]
with every $a_y\ne0$. The injectivity of $h$ on $S$ implies that the points $\Phi(y)$, $y\in F_n$, are pairwise distinct. Thus
\[
 K=K_0\cup\{\Phi(y):y\in (F_n\setminus\{x\})\}
\]
is a compact subset of $\R^2$ which does not contain $p$. Define
\[
 \varphi(z)=
 \frac{\operatorname{dist}(z,K)}
 {\operatorname{dist}(z,K)+\|z-p\|}.
\]
Then $\varphi$ is continuous, $\varphi(p)=1$, and $\varphi$ vanishes on $K$. The right-hand side of \eqref{eq:equicontinuity} is therefore zero, whereas
\[
 \lambda_n(\varphi)
 =\mu_n(\varphi\circ\Phi)
 =a_x\ne0,
\]
a contradiction. This proves \eqref{eq:PhiS}.

The first coordinate projection of the compact set $K_0$ is bounded. Since $\Phi(S)\subseteq K_0$,
\[
 \sup_{x\in S}|f(x)|<\infty.
\]
As $f\in C(X)$ was arbitrary, $S$ is functionally bounded.

Let $f\in C(X)$. Choose $M>0$ with $|f(x)|\le M$ for $x\in S$ and let
\[
 \rho_M(t)=\max\{-M,\min\{t,M\}\}.
\]
Then $g=\rho_M\circ f$ belongs to $C_b(X)$ and agrees with $f$ on $S$. Every $\mu_n$ is supported by $S$, so $\mu_n(g)=\mu_n(f)$ for every $n\in \N$.
\end{proof}

\section{An abstract Banach-space theorem}\label{sec:abstract}
In this section  the sequence ideal is now supplemented by any Banach norm consistent with its natural inclusion in $c_0$.  The following Theorem \ref{thm:abstract-rigidity}  will also be used in the proof of  the main
Theorem \ref{thm:main-intro}.
\begin{theorem}\label{thm:abstract-rigidity}
Let $E\subseteq c_0$ be a non-zero symmetric sequence ideal. Suppose that $E$ is equipped with a Banach norm $\|\cdot\|_E$ such that the inclusion map
$$J:(E,\|\cdot\|_E)\rightarrow(c_0,\|\cdot\|_\infty)$$
is continuous. Let $Z$ be a Banach space and let
$$Q:Z\rightarrow E$$
be a bounded surjective operator. For $n\in\N$, let $\delta_n: E \to \R$  be defined $\delta_n(x)=x_n$. Assume that there exists a closed subspace $M$ of $Z^*$ with the Schur property such that
$Q^*\delta_n\in M$ for $n\in\N.$
Then $E=c_0$ as a set.
\end{theorem}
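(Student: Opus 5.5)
The plan is a proof by contradiction. Assume $E\neq c_0$ and combine three ingredients: the open mapping theorem for $Q$, Rosenthal's $\ell_1$-theorem inside the Schur space $M$, and the permutation invariance of $E$ from Lemma~\ref{lem:ideal-properties}.

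\emph{Preliminaries.} Since $J$ is continuous, each $\delta_n$ is a bounded functional on $(E,\|\cdot\|_E)$ with $\|\delta_n\|_{E^*}\le\|J\|$. Since $Q$ is a surjection between Banach spaces, the open mapping theorem shows that $Q^*:E^*\to Z^*$ is an isomorphism onto its (weak$^*$-closed) range; that is, $\|Q^*\varphi\|\ge c\|\varphi\|$ for some $c>0$. Moreover, $(\delta_n)$ is weak$^*$ null in $E^*$, because $\delta_n(x)=x_n\to0$ for $x\in E\subseteq c_0$. Hence $(Q^*\delta_n)$ is a bounded, weak$^*$ null sequence in $M$.

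\emph{Dichotomy.} Apply Rosenthal's $\ell_1$-theorem to $(Q^*\delta_n)$ in the Banach space $M$. Some subsequence $(Q^*\delta_{n_k})$ is either weakly Cauchy or equivalent to the unit vector basis of $\ell_1$.

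In the weakly Cauchy case, the Schur property of $M$ makes the subsequence norm Cauchy. Its norm limit must equal its weak$^*$ limit, which is $0$. So $\|Q^*\delta_{n_k}\|\to0$, and therefore $\|\delta_{n_k}\|_{E^*}\to0$. Since $\delta_{n_k}(e_{n_k})=1$, this forces $\|e_{n_k}\|_E\to\infty$.

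In the $\ell_1$ case, $(\delta_{n_k})$ is itself equivalent to the $\ell_1$ basis in $E^*$, because $Q^*$ is bounded below. Consider $R:E\to c_0$, $Rx=(x_{n_k})_k$. It is bounded, and its adjoint restricted to $\ell_1$ is $a\mapsto\sum_k a_k\delta_{n_k}$, which is bounded below. By the closed range theorem, $R$ is then surjective onto $c_0$. So for every $y\in c_0$ there is $x\in E$ with $x_{n_k}=y_k$. The solid projection onto the coordinates $\{n_k\}$ (Lemma~\ref{lem:ideal-properties}(i)) puts the vector $\sum_k y_ke_{n_k}$ in $E$. This vector has the same decreasing rearrangement as $y$, so $y\in E$ by Definition~\ref{def:symmetric}. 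Thus $E=c_0$, contradicting the assumption.

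\emph{Main obstacle.} The hard step is ruling out the first case, that is, showing that $\|e_{n}\|_E$ cannot be unbounded along a subsequence; the norm $\|\cdot\|_E$ is not assumed to be symmetric. The idea is to exploit automatic continuity. Any map $E\to E$ given by an injection of indices $j\mapsto m_j$, namely $x\mapsto\sum_j x_je_{m_j}$, maps $E$ into $E$ because it preserves decreasing rearrangements. It also has closed graph, since it is coordinatewise continuous through $J$. Hence it is $\|\cdot\|_E$-bounded.

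I would first try to build one such operator, together with a Banach--Steinhaus argument over a suitable countable family of them, that forces $\sup_k\|e_{n_k}\|_E<\infty$. One candidate is a family of injections sending a fixed block of indices onto blocks inside $\{n_k\}$, tested on a suitable vector of $E$. If this fails, the fallback is to use the growth of $\|e_{n_k}\|_E$ directly. One would choose $c_k\to0$ with $\sum_k c_ke_{n_k}\in E$ while $\|c_ke_{n_k}\|_E\to\infty$, and derive a contradiction with the convergence of the coordinate expansion furnished by the bounded injection operators. I expect this step to need the most care.
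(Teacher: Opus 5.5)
Your proof is incomplete at exactly the step you call the main obstacle. In the weakly Cauchy branch of Rosenthal's dichotomy you correctly reach $\|\delta_{n_k}\|_{E^*}\to0$ (equivalently $\|e_{n_k}\|_E\to\infty$). You never derive a contradiction from this; you only list strategies you \emph{would} try. Without this step the dichotomy proves nothing, so as written the argument does not establish the theorem. Neither sketch works as stated:
\begin{itemize}
\item A Banach--Steinhaus argument over a family of individually bounded injection operators $T_j$ needs the pointwise bound $\sup_j\|T_jx\|_E<\infty$ for every $x\in E$, and you do not say how to obtain it.
\item The fallback appeals to convergence of coordinate expansions $\sum_n x_ne_n$ in $(E,\|\cdot\|_E)$. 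This is not available a priori, since nothing in the hypotheses makes $c_{00}$ norm-dense in $E$.
\end{itemize}

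The missing idea, which is how the paper does it, is to encode infinitely many swaps in a \emph{single} operator, so no uniform bound over a family is needed. Suppose $\inf_n\|\delta_n\|=0$. Since every $\delta_n\neq0$, also $\inf_{n>N}\|\delta_n\|=0$ for every $N$. So you can choose $q_1<p_1<q_2<p_2<\cdots$ with $\|\delta_{q_k}\|>k\|\delta_{p_k}\|$. Let $\sigma$ be the permutation exchanging $q_k$ and $p_k$ for every $k$. The operator $P_\sigma$ is bounded on $E$ by exactly the closed-graph observation you already made, and $P_\sigma^*\delta_{p_k}=\delta_{q_k}$. Hence $k\|\delta_{p_k}\|<\|\delta_{q_k}\|\le\|P_\sigma\|\,\|\delta_{p_k}\|$ for all $k$, which is absurd. (Equivalently, in your language, pick $\|e_{p_k}\|_E>k\|e_{q_k}\|_E$.) So $\inf_n\|\delta_n\|>0$, and the weakly Cauchy case cannot occur.

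With this inserted, the rest of your argument is correct, and it differs from the paper's in two places:
\begin{itemize}
\item You replace the unconditionality of $(\delta_n)$ (diagonal multipliers plus the dichotomy for unconditional basic sequences) by Rosenthal's $\ell_1$-theorem.
\item In the $\ell_1$ case you finish with the closed range theorem for $R:E\to c_0$ plus symmetry of $E$. The paper instead transfers the $\ell_1$-estimate to the whole sequence $(\delta_n)$ and compares $\|\cdot\|_E$ with $\|\cdot\|_\infty$ on $c_{00}$ through the projections $P_N$.
\end{itemize}
Your ending is shorter and neater, at the price of invoking a much deeper theorem. One small slip: the lower $\ell_1$-estimate for $(\delta_{n_k})$ in $E^*$ follows from the boundedness of $Q^*$, not from $Q^*$ being bounded below.
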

The proof will be divided into several lemmas. Since $E\ne\{0\}$, Lemma~\ref{lem:ideal-properties} gives $c_{00}\subseteq E$.

\begin{lemma}\label{lem:diagonal}
For every $a=(a_n)\in\ell_\infty$, the diagonal multiplier
\[
 D_a:E\rightarrow E,
 \qquad
 D_a(x)=(a_nx_n),
\]
is bounded. Moreover,
\[
 K_D:=\sup_{\|a\|_\infty\le1}\|D_a\|<\infty.
\]
\end{lemma}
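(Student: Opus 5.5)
The plan is to show first that each single multiplier $D_a$ is bounded, using the closed graph theorem, and then to obtain the uniform bound $K_D$ from the uniform boundedness principle.

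\textbf{Boundedness of each $D_a$.} Fix $a\in\ell_\infty$. By Lemma~\ref{lem:ideal-properties}(iv), $D_a$ maps $E$ into $E$, and it is clearly linear. Since $(E,\|\cdot\|_E)$ is a Banach space, the closed graph theorem applies once we show that the graph of $D_a$ is closed. Let $x^{(k)}\to x$ in $E$ and $D_a x^{(k)}\to y$ in $E$. Continuity of the inclusion $J$ gives $x^{(k)}\to x$ and $D_ax^{(k)}\to y$ coordinatewise, since $\|\cdot\|_\infty$-convergence implies coordinatewise convergence. For each $n$ we then have
\[
y_n=\lim_k a_nx^{(k)}_n=a_nx_n .
\]
Hence $y=D_ax$, the graph is closed, and $D_a$ is bounded.

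\textbf{Uniform bound.} Consider the family $\{D_a:\|a\|_\infty\le1\}$ of bounded operators on the Banach space $E$. For a fixed $x\in E$ we need $\sup_{\|a\|_\infty\le1}\|D_ax\|_E<\infty$. I would first try to get this by factoring: the map $\ell_\infty\to E$, $a\mapsto D_ax$, is linear and has closed graph. Indeed, if $a^{(k)}\to a$ in $\ell_\infty$ and $D_{a^{(k)}}x\to z$ in $E$, then coordinatewise $z_n=\lim_k a^{(k)}_nx_n=a_nx_n$. So this map is bounded from the Banach space $\ell_\infty$ to $E$, with some constant $C_x$, and therefore
\[
\sup_{\|a\|_\infty\le1}\|D_ax\|_E\le C_x<\infty .
\]
The Banach--Steinhaus theorem applied to the family $\{D_a:\|a\|_\infty\le1\}$ then yields $K_D<\infty$.

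\textbf{Main obstacle.} The only delicate point is the pointwise boundedness needed for Banach--Steinhaus. Applying the closed graph theorem a second time, to the map $a\mapsto D_ax$ on $\ell_\infty$ for fixed $x$, handles it cleanly. The rest is routine: continuity of $J$ turns every norm limit into a coordinatewise limit, which identifies the limits in both closed-graph arguments.
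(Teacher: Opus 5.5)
Your proposal is correct and follows essentially the same route as the paper. You use the closed graph theorem for each $D_a$, then the closed graph theorem for $a\mapsto D_ax$ on $\ell_\infty$ to get pointwise boundedness, then Banach--Steinhaus. The only cosmetic difference is that you identify the limits coordinatewise, whereas the paper uses the boundedness of $D_a$ on $c_0$ and the estimate $\|D_{a^{(j)}}x-D_ax\|_\infty\le\|a^{(j)}-a\|_\infty\|x\|_\infty$.
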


\begin{proof}
By Lemma~\ref{lem:ideal-properties}, $D_a(E)\subseteq E$. Fix $a\in\ell_\infty$. If $x_j\to x$ in $E$ and $D_ax_j\to y$ in $E$, then continuity of $J$ implies both convergences in $c_0$. Since $D_a$ is bounded on $c_0$, we have $D_ax_j\to D_ax$ in $c_0$, and therefore $y=D_ax$. Thus $D_a$ has closed graph, so it is bounded.

Fix $x\in E$ and consider
\[
 M_x:\ell_\infty\rightarrow E,
 \qquad
 M_x(a)=D_ax.
\]
If $a^{(j)}\to a$ in $\ell_\infty$ and $M_x(a^{(j)})\to y$ in $E$, then
\[
 \|D_{a^{(j)}}x-D_ax\|_\infty
 \le\|a^{(j)}-a\|_\infty\|x\|_\infty\rightarrow0.
\]
The continuity of $J$ again implies $y=D_ax$. Hence $M_x$ has closed graph, so it is bounded. Consequently,
\[
 \sup_{\|a\|_\infty\le1}\|D_ax\|_E<\infty
 \qquad(x\in E).
\]
The uniform boundedness principle applied to the family $\{D_a:\|a\|_\infty\le1\}$ gives the assertion.
\end{proof}

\begin{lemma}\label{lem:permutations}
Every permutation of the coordinates induces a bounded automorphism of $E$. If $n_1<n_2<\cdots$, then
$R:E\rightarrow E,\,\, R(x)=(x_{n_k})_{k=1}^{\infty}$ is bounded.
\end{lemma}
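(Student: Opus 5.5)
Both assertions will follow from the closed graph theorem, exactly as in Lemma~\ref{lem:diagonal}, once we know the operators map $E$ into $E$.

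\textbf{Permutations.} Let $\pi$ be a permutation of $\N$ and put $P_\pi(x)=(x_{\pi(n)})$. By Lemma~\ref{lem:ideal-properties}(ii), $P_\pi$ maps $E$ into $E$. It is linear and it is an isometry of $(c_0,\|\cdot\|_\infty)$.

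We check that $P_\pi:(E,\|\cdot\|_E)\to(E,\|\cdot\|_E)$ has closed graph. Let $x_j\to x$ and $P_\pi x_j\to y$ in $E$. Continuity of $J$ gives $x_j\to x$ and $P_\pi x_j\to y$ in $c_0$. Since $\|P_\pi x_j-P_\pi x\|_\infty=\|x_j-x\|_\infty\to0$, uniqueness of limits in $c_0$ gives $y=P_\pi x$. As $E$ is Banach, the closed graph theorem shows that $P_\pi$ is bounded.

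The inverse of $P_\pi$ is $P_{\pi^{-1}}$, which is bounded by the same argument. Hence $P_\pi$ is a bounded automorphism of $E$.

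\textbf{Subsequences.} Let $n_1<n_2<\cdots$ and $R(x)=(x_{n_k})_{k}$. By Lemma~\ref{lem:ideal-properties}(iii), $R(E)\subseteq E$. We have $\|Rx\|_\infty\le\|x\|_\infty$, so $R$ is continuous on $c_0$, and $R$ is linear. If $x_j\to x$ and $Rx_j\to y$ in $E$, then both convergences hold in $c_0$. Thus $Rx_j\to Rx$ in $c_0$, so $y=Rx$. The closed graph theorem again gives boundedness.

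\textbf{Remark.} A uniform bound $\sup_\pi\|P_\pi\|<\infty$ could be obtained as in Lemma~\ref{lem:diagonal}, but the statement does not ask for it. The only real point is the one already used in Lemma~\ref{lem:diagonal}: continuity of $J$ turns limits in $E$ into limits in $c_0$, where the coordinate operators are trivially continuous. I do not expect any genuine obstacle.
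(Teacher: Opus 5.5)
Your proof is correct and is essentially the paper's argument: invariance of $E$ under the coordinate operations comes from Lemma~\ref{lem:ideal-properties}, closedness of the graph comes from passing to $c_0$ via the continuity of $J$, boundedness then follows from the closed graph theorem, and the inverse permutation gives the automorphism claim. One caution: the uniform bound $\sup_\pi\|P_\pi\|<\infty$ in your closing remark does not follow as in Lemma~\ref{lem:diagonal}, because permutations are not linearly parametrized by a Banach space and so that closed-graph step giving pointwise boundedness is unavailable; since the statement does not need it, simply drop the remark.
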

\begin{proof}
Algebraic invariance follows from symmetry. The closedness of the graph follows from the continuity of $E\rightarrow c_0$ and the corresponding continuity of the coordinate operation on $c_0$. The closed graph theorem, see \cite[Theorem 11.1.7]{Jarchow},  gives the boundedness. By the same argument to the inverse permutation we obtain  a bounded automorphism. For a subsequence, $(x_{n_k})^*\le x^*$, so the map is well defined; finally the same closed graph argument applies.
\end{proof}

For every $n$, the coordinate functional $\delta_n$ belongs to $E^*$ and $\|\delta_n\|\leq \|J\|$, because
$\delta_n(x)|\le\|x\|_\infty\le\|J\|\,\|x\|_E.$
Put
\[
 F=\spancl\{\delta_n:n\in\N\}\subseteq E^*.
\]
We need also the following
\begin{lemma}\label{lem:unconditional}
The sequence $(\delta_n)$ is a seminormalized unconditional Schauder basis of $F$.
\end{lemma}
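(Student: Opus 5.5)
We need to show three things: $\inf_n\|\delta_n\|>0$ and $\sup_n\|\delta_n\|<\infty$ (seminormalized), and that $(\delta_n)$ is an unconditional basic sequence in $E^*$. The plan is to handle the norm bounds first and then obtain unconditionality from the diagonal multipliers of Lemma~\ref{lem:diagonal} via duality.

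\emph{Seminormalization.} The upper bound $\|\delta_n\|\le\|J\|$ was noted above. For the lower bound we use the unit vectors. By Lemma~\ref{lem:ideal-properties}(v), $e_n\in E$, so $\|\delta_n\|\ge 1/\|e_n\|_E$, and it suffices to bound $\|e_n\|_E$ uniformly. Let $\pi_n$ be the transposition swapping $1$ and $n$. We should not apply Lemma~\ref{lem:permutations} to each $\pi_n$ separately, since that gives no uniform constant. Instead we obtain uniformity in one of two ways. One option is a single closed-graph argument: the map $\ell_\infty\to E$, $a\mapsto D_a x$ of Lemma~\ref{lem:diagonal}, applied to a fixed $x\in E$ with $x_n\ne0$ for infinitely many $n$. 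The other, simpler, option is to choose one permutation $\sigma$ that moves $1$ into a fixed position and apply Lemma~\ref{lem:permutations}. Concretely: if $E\ne c_{00}$, pick $x\in E$ with infinitely many nonzero coordinates. After a permutation and a solid reduction (Lemma~\ref{lem:ideal-properties}), we may assume $|x_n|\ge c_n>0$ along an infinite set. Then $e_n=D_{a}x$ with $a=x_n^{-1}e_n$, but $\|a\|_\infty$ blows up, so this is not directly useful. The cleanest route is therefore to bound $|\delta_n(x)|$ from below directly. Take $x=e_n$ and note that $e_n=P_ne_1$, where $P_n$ is the permutation swapping $1$ and $n$. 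Each $P_n$ is bounded, and the family $\{P_n\}$ is pointwise bounded on $E$: for $y\in E$, the sequence $P_ny$ is obtained from the single-coordinate perturbation $y-y_1e_1-y_ne_n$ plus two unit-vector terms. Given that $\sup_n\|e_n\|_E<\infty$ is essentially what is being proved, the argument risks circularity. The fallback is this: all $e_n$ have norm at most $\|D_{\chi_{\{n\}}}\|\,\|z\|_E/|z_n|$, where $z\in E$ is any vector with $|z_n|=1$ for the relevant $n$. Such a $z$ exists in $E$ only if $E=\ell_\infty$-like, which it is not. This is the main obstacle, and the first thing to try is to cover it via uniform boundedness over the countable family $\{P_n\}$, proving pointwise boundedness through the closed-graph map $y\mapsto (P_ny)_n\in\ell_\infty(E)$.

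\emph{Unconditionality.} For finitely supported coefficients $(c_n)$ and signs $\varepsilon_n=\pm1$, we have for all $x\in E$
\[
\Bigl(\sum_n\varepsilon_nc_n\delta_n\Bigr)(x)=\Bigl(\sum_nc_n\delta_n\Bigr)(D_\varepsilon x).
\]
Hence
\[
\Bigl\|\sum\varepsilon_nc_n\delta_n\Bigr\|=\Bigl\|D_\varepsilon^*\sum c_n\delta_n\Bigr\|\le K_D\Bigl\|\sum c_n\delta_n\Bigr\|
\]
by Lemma~\ref{lem:diagonal}. Taking $\varepsilon\in\{0,1\}$-valued multipliers gives the same bound, so the coordinate projections are uniformly bounded by $K_D$. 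The $\delta_n$ are linearly independent and nonzero. By the standard Grunblum-type criterion, a sequence of nonzero vectors with uniformly bounded initial-segment and sign-change operators on its span is an unconditional basic sequence, hence an unconditional basis of its closed span $F$. This part is routine; the delicate step is the uniform lower bound on $\|\delta_n\|$, equivalently on $\sup_n\|e_n\|_E$, which rests on making the permutation operators uniformly bounded.
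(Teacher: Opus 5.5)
Your unconditionality argument is correct and is the paper's own argument: $D_\varepsilon^*\delta_n=\varepsilon_n\delta_n$, the uniform bound $K_D$ from Lemma~\ref{lem:diagonal}, then a Grunblum-type criterion. The upper bound $\|\delta_n\|\le\|J\|$ and the reduction $\|\delta_n\|\ge 1/\|e_n\|_E$ are also fine.

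\textbf{The gap.} You never prove the lower bound $\inf_n\|\delta_n\|>0$, and the route you settle on is circular. The closed graph theorem applies to $y\mapsto(P_ny)_n$ only once you know this map takes values in $\ell_\infty(E)$, i.e.\ that $\sup_n\|P_ny\|_E<\infty$ for every $y\in E$. For $y=e_1$ this is exactly $\sup_n\|e_n\|_E<\infty$, which is what you are trying to prove. Your other options fail too. The map $a\mapsto D_ax$ only gives $\|e_n\|_E\le K/|x_n|$ with $x_n\to0$. Applying Lemma~\ref{lem:permutations} to the transpositions one at a time gives no uniform constant.

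\textbf{The missing idea.} Combine infinitely many disjoint transpositions into a \emph{single} permutation. Then Lemma~\ref{lem:permutations} is used once, and the norm of one bounded operator supplies the uniform constant. This is how the paper argues:
\begin{enumerate}[label=\textup{(\arabic*)}]
\item Suppose $\inf_n\|\delta_n\|=0$. Each $\delta_n\neq0$, since $\delta_n(e_n)=1$, so the infimum over every tail is still $0$.
\item Hence one can choose $q_1<p_1<q_2<p_2<\cdots$ with $\|\delta_{q_k}\|>k\|\delta_{p_k}\|$.
\item Let $\sigma$ swap $q_k\leftrightarrow p_k$ for all $k$ simultaneously. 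Then $P_\sigma^*\delta_{p_k}=\delta_{q_k}$.
\item This gives $k\|\delta_{p_k}\|<\|\delta_{q_k}\|\le\|P_\sigma\|\,\|\delta_{p_k}\|$, which is impossible for large $k$.
\end{enumerate}
The same device works in your formulation. If $\sup_n\|e_n\|_E=\infty$, choose such pairs with $\|e_{p_k}\|_E>k\|e_{q_k}\|_E$ and use $P_\sigma e_{q_k}=e_{p_k}$.
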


\begin{proof}
Clearly, every $\delta_n$ is non-zero and $\sup_n \|\delta_n\|\leq \|J\|<\infty$.  Suppose that $\inf_n\|\delta_n\|=0$. We can choose positive integers $q_1, p_1, q_2, p_2, \ldots$ with $q_1<p_1<q_2<p_2<\ldots$ such that for every $k\in \N$ we have
\[
 \|\delta_{q_k}\|>k\|\delta_{p_k}\|.
\] Let $\sigma$ be a permutation of $\N$ such that $\sigma (q_k)=p_k$ and $\sigma (p_k)=q_k$ for every $k\in \N$.
By Lemma~\ref{lem:permutations}, the corresponding permutation operator $P_\sigma$ is bounded. Since
\[
 P_\sigma^*\delta_{p_k}=\delta_{q_k},
\]
for every $k\in \N$, we obtain
\[
 \|\delta_{q_k}\|\le\|P^*_\sigma\|\,\|\delta_{p_k}\|,
\]
a contradiction for sufficiently large $k$. Thus $\inf_n\|\delta_n\|>0$.

For $a\in\ell_\infty$, the adjoint $D_a^*$ leaves $F$ invariant and
\[
 D_a^*\delta_n=a_n\delta_n.
\]
By Lemma~\ref{lem:diagonal}, these operators are uniformly bounded when $\|a\|_\infty\le1$.

Let $n,m \in \N$ with $n\leq m$, let $\varepsilon_1,
\ldots, \varepsilon_n \in \{-1, 1\}, a_1, \ldots, a_m \in \R$ and $\varepsilon=(\varepsilon_1, \ldots, \varepsilon_n, 0, 0, \ldots)$. Then $D^*_{\varepsilon}(\sum_{k=1}^m a_k \delta_k)= \sum_{k=1}^n \varepsilon_k a_k\delta_k$, so
\[\|\sum_{k=1}^n \varepsilon_k a_k\delta_k\|\leq \|D^*_{\varepsilon}\| \|\sum_{k=1}^m a_k \delta_k\|\leq K_D \|\sum_{k=1}^m a_k \delta_k\|.\]

It follows that $(\delta_n)$ is an unconditional Schauder basis of $F$,  by \cite[Proposition 4.36]{Fabian}. \end{proof}


We use also the following known standard dichotomy for unconditional basic sequences \cite[Lemma 4.1]{Laustsen}.


\begin{lemma}\label{lem:dichotomy}
Let $(u_n)$ be a seminormalized unconditional basic sequence in a Banach space. Then either $(u_n)$ is weakly null, or some subsequence of $(u_n)$ is equivalent to the canonical basis of $\ell_1$.
\end{lemma}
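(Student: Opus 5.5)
This is the standard dichotomy for seminormalized unconditional basic sequences: such a sequence is either weakly null or contains a subsequence equivalent to the unit vector basis of $\ell_1$. The plan is to derive it from Rosenthal's $\ell_1$ theorem together with the fact that a weakly Cauchy unconditional basic sequence is weakly null. Let $(u_n)$ be seminormalized and unconditional with constant $K$, and suppose $(u_n)$ is not weakly null. Then there exist $\phi\in X^*$, $\varepsilon>0$ and a subsequence $(u_{n_k})$ with $|\phi(u_{n_k})|\ge\varepsilon$ for all $k$. Since $(u_{n_k})$ is bounded, Rosenthal's $\ell_1$ theorem says that either some further subsequence is weakly Cauchy, or some further subsequence is equivalent to the $\ell_1$ basis. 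In the second case we are done, so the whole task is to exclude the first.

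So suppose, after relabelling, that $(v_k)$ is a subsequence of $(u_n)$ that is weakly Cauchy and satisfies $|\phi(v_k)|\ge\varepsilon$ for every $k$. A subsequence of an unconditional basic sequence is again unconditional, with the same constant $K$. Passing to a further subsequence, we may assume $\phi(v_k)$ has constant sign, say $\phi(v_k)\ge\varepsilon$. Since $(v_k)$ is weakly Cauchy, the differences $w_k=v_{2k}-v_{2k-1}$ are weakly null.

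The main step is to reach a contradiction using unconditionality. Consider the sums $s_N=\sum_{k=1}^N v_k$. Then
\[
\phi(s_N)\ge N\varepsilon, \qquad\text{so}\qquad \|s_N\|\ge N\varepsilon/\|\phi\|,
\]
and this alone is consistent with $\ell_1$-behaviour. The contradiction must therefore come from weak Cauchyness, via the sign-changed sums $t_N=\sum_{k=1}^{N}(-1)^k v_k$. On the one hand, unconditionality gives
\[
\|t_N\|\ge K^{-1}\|s_N\|\ge cN .
\]
On the other hand, $t_{2N}=\sum_{k=1}^N w_k$ is a sum of weakly null terms. To turn this into a norm bound I would use Mazur's lemma: some convex block combination of the $w_k$ is small in norm. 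Unconditionality of the block sequence then transfers this back to a contradiction with the lower estimate $\|\sum_{k\in A}\pm v_k\|\ge c|A|$. The first thing to write down is the functional $\phi$ evaluated on a normalised convex block; this makes the lower bound explicit. Unconditionality then shows that the corresponding convex block of the $\pm v_k$ has norm comparable to that of the block of the $v_k$ themselves.

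The cleanest approach is probably to avoid this computation altogether by citing the classical fact (Albiac--Kalton, or Lindenstrauss--Tzafriri) that a non-trivial weakly Cauchy sequence has no unconditional basic subsequence. This fact rests on the statement that a weakly Cauchy unconditional basic sequence is weakly null: its sign-changed partial sums are uniformly bounded in the weak$^*$ sense, so the coefficient functionals of any $x^{**}$ are summable, which forces the weak$^*$ limit to be $0$. Applied to $(v_k)$, this gives $v_k\to0$ weakly, contradicting $\phi(v_k)\ge\varepsilon$. Hence the first alternative of Rosenthal's theorem cannot occur, and some subsequence of $(u_n)$ is equivalent to the $\ell_1$ basis. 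The main obstacle is this weakly-Cauchy-implies-weakly-null step. I would handle it through the bidual argument: the weak$^*$ limit $x^{**}$ of $(v_k)$ satisfies $x^{**}(v_k^*)=\lim_j v_k^*(v_j)=0$ for each biorthogonal functional $v_k^*$, and together with unconditionality this forces $x^{**}=0$ on $\spancl\{v_k\}$.
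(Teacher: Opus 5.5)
The paper does not prove this lemma; it only quotes Lemma~4.1 of Laustsen. Your route is Rosenthal's $\ell_1$ theorem followed by excluding the weakly Cauchy alternative, and it is valid if you exclude that alternative with the Mazur computation from your third paragraph. That computation has to be done with weights, and it must use unconditionality of $(v_k)$ itself (flip the signs of the $v_{2k-1}$), not unconditionality of the block sequence $(w_k)$ and not the equal-coefficient bound $c|A|$. For convex weights $(\lambda_k)$ it reads
\[
\Bigl\|\sum_k\lambda_k w_k\Bigr\|\ge K^{-1}\Bigl\|\sum_k\lambda_k(v_{2k}+v_{2k-1})\Bigr\|\ge\frac{2\varepsilon}{K\|\phi\|},
\]
which contradicts Mazur's lemma.

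However, the same kind of inequality proves the lemma outright, with no Rosenthal theorem, weak Cauchy sequences or Mazur. As soon as $\phi(u_{n_k})\ge\varepsilon$ for all $k$, every finitely supported $(a_k)$ satisfies
\[
\Bigl\|\sum_k a_k u_{n_k}\Bigr\|\ge K^{-1}\Bigl\|\sum_k |a_k|\,u_{n_k}\Bigr\|\ge\frac{1}{K\|\phi\|}\,\phi\Bigl(\sum_k |a_k|\,u_{n_k}\Bigr)\ge\frac{\varepsilon}{K\|\phi\|}\sum_k|a_k|.
\]
Together with the trivial bound $\|\sum_k a_ku_{n_k}\|\le\sup_n\|u_n\|\sum_k|a_k|$, this shows $(u_{n_k})$ is already equivalent to the unit vector basis of $\ell_1$. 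Your remark that $\|s_N\|\ge N\varepsilon/\|\phi\|$ is merely ``consistent with $\ell_1$-behaviour'' was one application of unconditionality away from the whole proof. This is the standard elementary proof of the dichotomy. Your version gains nothing over it and imports a deep theorem to reach the same conclusion.

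Drop the bidual sketch, which you give as your way past ``the main obstacle'': as written it is not a proof. The two facts it uses are $x^{**}(v_k^*)=0$ for all $k$ and unconditionality, and they do not force $x^{**}=0$. The $v_k^*$ span a norm-dense subspace of $(\spancl\{v_k\})^*$ only when $(v_k)$ is shrinking, and unconditional bases need not be shrinking. Concretely, take the unit vector basis $(e_k)$ of $\ell_1$. Every weak$^*$ cluster point of $(e_k)$ in $\ell_1^{**}=\ell_\infty^*$ vanishes on every $e_k^*$, yet takes the value $1$ at $(1,1,1,\dots)$. What actually kills the weak$^*$ limit is the interaction of $\phi(v_k)\ge\varepsilon$ with unconditionality, which is the computation displayed above. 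Citing the classical fact that a non-trivial weakly Cauchy sequence has no unconditional basic subsequence is legitimate, but its proof is exactly that computation.
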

We are ready to prove Theorem \ref{thm:abstract-rigidity}.
\begin{proof}[Proof of Theorem~\ref{thm:abstract-rigidity}]
We apply Lemma~\ref{lem:dichotomy} to $(\delta_n)$ in $F$.

Assume first that $\delta_n\to0$ weakly in $F$. Every element of $E^{**}$ restricts to a continuous functional on $F$, so the sequence is also weakly null when regarded as a sequence in $E^*$. Therefore
\[
 Q^*\delta_n\rightarrow0
 \quad\text{weakly in }Z^*.
\]
All these functionals belong to the closed subspace $M$. By Hahn--Banach, weak convergence in $Z^*$ restricts to weak convergence in $M$, and the Schur property gives
\[
 \|Q^*\delta_n\|\rightarrow0.
\]
Since $Q$ is a bounded surjection between Banach spaces, the open mapping theorem implies that $Q^*$ is bounded below: there exists $c>0$ such that
\[
 \|Q^*\psi\|\ge c\|\psi\|
 \qquad(\psi\in E^*).
\]
This contradicts the inequality $\inf_n\|\delta_n\|>0$.

Thus the second alternative holds: There is a strictly increasing sequence $(n_k)$ and $c>0$ such that
\begin{equation}\label{eq:l1-subsequence}
 c\sum_{k=1}^r|a_k|
 \le
 \left\|\sum_{k=1}^r a_k\delta_{n_k}\right\|
\end{equation}
for every finite scalar sequence $(a_1, \ldots, a_r)$. Let
\[
 R:E\to E,
 \qquad
 R(x)=(x_{n_k})_{k=1}^{\infty}.
\]
By Lemma~\ref{lem:permutations}, $R$ is bounded, and
\[
 R^*\delta_k=\delta_{n_k}.
\]
Hence \eqref{eq:l1-subsequence} gives
\[
 \frac{c}{\|R^*\|}\sum_{k=1}^r|a_k|
 \le
 \left\|\sum_{k=1}^r a_k\delta_k\right\|.
\]
Together with the inequality $\sup_n\|\delta_n\|<\infty$, this shows that the sequence $(\delta_n)$ is equivalent to the canonical basis of $\ell_1$. Thus there are constants $\alpha,M>0$ such that
\begin{equation}\label{eq:l1-full}
 \alpha\sum_{n=1}^r|a_n|
 \le
 \left\|\sum_{n=1}^r a_n\delta_n\right\|
 \le
 M\sum_{n=1}^r|a_n|.
\end{equation}

Let
\[
 E_0=\overline{c_{00}}^{\|\cdot\|_E}.
\]
The coordinate projections
\[
 P_N: E \to E, P_Nx=(x_1,\ldots,x_N,0,0,\ldots)
\]
are diagonal multipliers, and Lemma~\ref{lem:diagonal} gives
\[
 K_P:=\sup_N\|P_N\|<\infty.
\]
It follows by density that $P_Nx\to x$ in $E$ for every $x\in E_0$. Thus $(e_n)$ is a Schauder basis of $E_0$, with coefficient functionals $\delta_n|_{E_0}$.

Let $\varphi\in E_0^*$ and extend it by Hahn--Banach to $\widetilde\varphi\in E^*$ with the same norm. Then
\[
 P_N^*\widetilde\varphi
 =\sum_{n=1}^N\varphi(e_n)\delta_n.
\]
Using the lower estimate in \eqref{eq:l1-full},
\[
 \alpha\sum_{n=1}^N|\varphi(e_n)|
 \le\|P_N^*\widetilde\varphi\|
 \le K_P\|\varphi\|.
\]
For $x=\sum_{n=1}^N x_ne_n\in c_{00}$, this yields
\[
 |\varphi(x)|
 \le\|x\|_\infty\sum_{n=1}^N|\varphi(e_n)|
 \le\frac{K_P}{\alpha}\|x\|_\infty\|\varphi\|.
\]
Taking the supremum over the unit ball of $E_0^*$ gives
\begin{equation}\label{eq:E-sup-upper}
 \|x\|_E\le\frac{K_P}{\alpha}\|x\|_\infty
 \qquad(x\in c_{00}).
\end{equation}
The continuity of $J$ gives the reverse estimate
\begin{equation}\label{eq:E-sup-lower}
 \|x\|_\infty\le\|J\|\,\|x\|_E
 \qquad(x\in E).
\end{equation}
Hence the two norms are equivalent on $c_{00}$.

Let $x=(x_n)\in c_0$ and let $x^{(N)}=(x_1,\ldots,x_N,0,\ldots)$. The sequence $(x^{(N)})$ is Cauchy in the supremum norm, and therefore, by \eqref{eq:E-sup-upper}, Cauchy in $E$. It converges in the Banach space $E_0$ to some $y$. By \eqref{eq:E-sup-lower}, it also converges to $y$ in $c_0$, while it converges to $x$ in the supremum norm. Thus $y=x$. Hence $c_0\subseteq E_0\subseteq E\subseteq c_0$, and consequently $E=c_0$.
\end{proof}

\section{Proof of main Theorem   \ref{thm:main-intro}} \label{sec:main-proof}

\begin{proof}[Proof of Theorem \ref{thm:main-intro}] For $n\in\N$, let $\pi_n:E_p\to\R$ be the $n$-th coordinate functional and put
\[
 \mu_n=\pi_n\circ T\in C_p(X)^*.
\]
Then
\[
 T(f)=\bigl(\mu_n(f)\bigr)_{n=1}^{\infty}
 \qquad \mbox{for} f\in C(X).
\]
Since $E\subseteq c_0\subset l_{\infty}$, Theorem~\ref{thm:functionally-bounded} shows that
\[
 S=\bigcup_{n=1}^{\infty}\supp\mu_n
\]
is functionally bounded and every $f\in C(X)$ can be replaced by $g\in C_b(X)$ with the same values under all $\mu_n$. Consequently,
\[
 T(C_b(X))=T(C(X))=E.
\]
Define
\[
 T_b:C_b(X)\rightarrow c_0,
 \qquad
 T_b(g)=\bigl(\mu_n(g)\bigr)_{n=1}^{\infty}.
\]
Its range is $E$. We claim that $T_b:C_b(X)\to c_0$ is bounded. If $g_j\to g$ in $C_b(X)$ and $T_bg_j\to y$ in $c_0$, then, for every $n$,
\[
 y_n=\lim_j\mu_n(g_j)=\mu_n(g),
\]
so $y=T_bg$. Thus the graph of $T_b$ is closed, so $T_b$ is bounded.

Equip $E$ with the quotient norm
\[
 \|x\|_E
 =\inf\{\|g\|_\infty:T_bg=x\}.
\]
Then $E$ is isometrically isomorphic to the Banach quotient
\[
 C_b(X)/\ker T_b,
\]
and the natural inclusion $E\hookrightarrow c_0$ is bounded because
\[
 \|x\|_\infty\le\|T_b:C_b(X)\to c_0\|\,\|x\|_E.
\]
With this norm, $T_b:C_b(X)\to E$ is a bounded surjection.

Let
\[
 M_S=\spancl\{\delta_x:x\in S\}\subseteq C_b(X)^*.
\]
For every finite $A\subseteq S$ and scalars $(a_x)_{x\in A}$ we have
\[
 \left\|\sum_{x\in A}a_x\delta_x\right\|
 =\sum_{x\in A}|a_x|.
\]
Therefore $M_S$ is isometrically isomorphic to $\ell_1(S)$ and has the Schur property. Moreover,
$T_b^*\delta_n=\mu_n\in M_S$
for
 $n\in\N.$
All hypotheses of Theorem~\ref{thm:abstract-rigidity} are satisfied with
\[
 Z=C_b(X),\qquad Q=T_b\;\;\; \mbox{and}\qquad M=M_S.
\]
Hence $E=c_0$.
\end{proof}
\begin{corollary}\label{cor:no-proper}
Let $X$ be a Tychonoff space and let $E$ be a non-zero proper symmetric sequence ideal in $c_0$. Then there is no continuous linear surjection
$C_p(X)\rightarrow E_p.$
\end{corollary}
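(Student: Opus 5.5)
This is a direct contrapositive of Theorem~\ref{thm:main-intro}, so the plan is to argue by contradiction. Suppose $E$ is a non-zero proper symmetric sequence ideal in $c_0$, so that $\{0\}\neq E\subsetneq c_0$. Suppose also that $T:C_p(X)\to E_p$ is a continuous linear surjection.

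Since $E\neq\{0\}$, every hypothesis of Theorem~\ref{thm:main-intro} is satisfied. That theorem then gives $E=c_0$. This contradicts the properness of $E$, and so no such surjection exists.

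The only point to check is that the notion of ``proper'' matches. Proper here means $E\neq c_0$ as a set, and the conclusion of Theorem~\ref{thm:main-intro} (via Theorem~\ref{thm:abstract-rigidity}) is exactly the set equality $E=c_0$. I see no real obstacle. The same reasoning also gives Corollary~\ref{cor:lq}, once one notes that each $\ell_q$ with $0<q<\infty$ is a non-zero proper symmetric sequence ideal in $c_0$. This holds because $y^*\le x^*$ pointwise with $x\in\ell_q$ forces $y\in\ell_q$, and because $(1/\log(n+1))\in c_0\setminus\ell_q$.
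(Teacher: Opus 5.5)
Your proposal is correct and matches the paper's approach: the paper states this corollary immediately after the proof of Theorem~\ref{thm:main-intro} as its direct contrapositive, exactly as you argue. Your side remark on $\ell_q$ for $0<q<\infty$ is also sound; the paper uses it to prove Corollary~\ref{cor:lq}.
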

Corollary~\ref{cor:no-proper} excludes arbitrary continuous linear images, not merely quotient maps. Note that  a continuous linear surjection between locally convex spaces need not be open.

\section{Proofs of Theorem \ref{thm:c00} and corollaries}\label{sec:boundary}
\begin{proof}[Proof of Theorem \ref{thm:c00}]
Let
\[
 T(f)=\bigl(\mu_n(f)\bigr)_{n=1}^{\infty},
 \qquad (\mu_n)\subset C_p(X)^*.
\]
Since $T(f)\in c_{00}\subset l_{\infty}$ for every $f$, Theorem~\ref{thm:functionally-bounded} implies that
\[
 S=\bigcup_n\supp\mu_n
\]
is functionally bounded. Hence
\[
 T(C(X))=T(C_b(X)).
\]
For $N\in\N$, put
\[
 F_N=\bigcap_{n>N}\ker(\mu_n|_{C_b(X)}).
\]
Each $F_N$ is a closed linear subspace of the Banach space $C_b(X)$. Since $T(g)\in c_{00}$ for every $g\in C_b(X)$, there exists $N$ depending on $g$ such that $\mu_n(g)=0$ for all $n>N$. Hence
\[
 C_b(X)=\bigcup_{N=1}^{\infty}F_N.
\]
By the Baire category theorem, some $F_N$ has non-empty interior. A linear subspace with non-empty interior is the whole space, so
\[
 \mu_n|_{C_b(X)}=0
 \qquad \mbox{for}\;\;\; n>N.
\]
Since each $\mu_n$ is a finite linear combination of point evaluations and $C_b(X)$ separates finite sets, this implies $\mu_n=0$ on $C(X)$ for $n>N$. Thus
\[
 T(C_p(X))\subseteq\operatorname{span}\{e_1,\ldots,e_N\},
\]
so the range of $T$ is finite-dimensional.
\end{proof}


\begin{proof}[Proof of Corollary \ref{cor:lq}]
For every $0<q<\infty$, the space $\ell_q$ is a non-zero symmetric sequence ideal and is a proper subspace of $c_0$. Corollary~\ref{cor:no-proper} applies.
\end{proof}

\begin{proof}[Proof of Corollary \ref{cor:classification}]
The implication (i)$\Rightarrow$(ii) follows from Theorem~\ref{thm:main-intro}, the equivalence between a surjection onto $(c_0)_p$ and the Josefson--Nissenzweig property established in \cite[Theorem~1]{BanakhKakolSliwa2018}. The equivalence of (ii)--(iv), and their implication to (i), are precisely the remaining parts of that theorem.
\end{proof}

\begin{corollary}\label{cor:some-X}
Let $E\subseteq c_0$ be a non-zero symmetric sequence ideal. Then $E=c_0$ if and only if for some Tychonoff space $X$, there exists a continuous linear surjection $C_p(X)\to E_p$.
\end{corollary}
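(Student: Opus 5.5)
The corollary is an equivalence, and I will prove the two directions separately, using Theorem~\ref{thm:main-intro} for one and an explicit example for the other.

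\emph{Necessity.} Suppose that for some Tychonoff space $X$ there is a continuous linear surjection $C_p(X)\to E_p$. Theorem~\ref{thm:main-intro} applies verbatim and gives $E=c_0$. No further work is needed here.

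\emph{Sufficiency.} Suppose $E=c_0$. It suffices to exhibit one Tychonoff space $X$ together with a continuous linear surjection $C_p(X)\to (c_0)_p$. I will take $X=\{0\}\cup\{1/n:n\in\N\}\subseteq\R$, a convergent sequence. By \cite[Theorem 1]{BanakhKakolSliwa2019}, $C_p(X)$ then contains a complemented copy of $(c_0)_p$, and the corresponding projection composed with the isomorphism is the required surjection.

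For a self-contained argument I would instead define
\[
 T:C_p(X)\to(c_0)_p,\qquad T(f)=\bigl(f(1/n)-f(0)\bigr)_{n=1}^{\infty}.
\]
\begin{enumerate}[label=\textup{(\alph*)}]
\item $T(f)\in c_0$, because $f$ is continuous at $0$.
\item $T$ is continuous into the topology inherited from $\R^{\N}$, because each coordinate $\delta_{1/n}-\delta_0$ is continuous on $C_p(X)$.
\item $T$ is surjective. Given $y\in c_0$, let $f(1/n)=y_n$ and $f(0)=0$. Every point $1/n$ is isolated in $X$, and $y_n\to0$, so $f$ is continuous on $X$, and $T(f)=y$.
\end{enumerate}
Equivalently, one could check the Josefson--Nissenzweig property with $\nu_n=\tfrac12(\delta_{1/n}-\delta_0)$ and invoke Corollary~\ref{cor:classification}; the direct construction avoids this.

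There is essentially no obstacle: the substantive content is already in Theorem~\ref{thm:main-intro}. The only point requiring care is the continuity of the preimage $f$ at $0$ in step (c), which is exactly where $y\in c_0$ is used.
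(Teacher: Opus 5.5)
Your proof is correct and takes the route the paper implicitly intends; the paper states this corollary without a written proof. The forward direction is exactly Theorem~\ref{thm:main-intro}, and the converse needs only one space $X$ with $C_p(X)$ mapping onto $(c_0)_p$, which is available from Corollary~\ref{cor:classification} or the cited convergent-sequence result. Your explicit operator $f\mapsto\bigl(f(1/n)-f(0)\bigr)_{n}$ on $X=\{0\}\cup\{1/n:n\in\N\}$ is a valid self-contained substitute for that citation, and your checks of membership in $c_0$, continuity and surjectivity are complete.
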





\end{document}